\newtheorem{thm}{Theorem}[section]
\newtheorem{question}[thm]{Question}
\newtheorem{lemma}[thm]{Lemma}
\newtheorem{exa}[thm]{Example}
\newcommand{\bbR}{\mathbb{R}}
\newcommand{\bbT}{\mathbb{T}}
\newcommand{\bbC}{\mathbb{C}}
\newcommand{\bbZ}{\mathbb{Z}}
\newcommand{\bbP}{\mathbb{P}}
\newcommand{\cp}{P}
\newcommand{\s}{g}
\newcommand{\spot}{\text{Spot}}
\newcommand{\hess}{\operatorname{Hess}}
\newcommand{\interior}{\operatorname{int}}
\newcommand{\Lap}{\triangle}
\newcommand{\diagentry}[1]{\mathmakebox[1.8em]{#1}}
\newcommand{\xddots}{%
  \raise 4pt \hbox {.}
  \mkern 6mu
  \raise 1pt \hbox {.}
  \mkern 6mu
  \raise -2pt \hbox {.}
}
\def\det{\mathop{\rm det}\nolimits}
\newcommand\Cinf{\mathcal{C}^\infty_0}
\newcommand{\F}{\mathcal F}
\begin{document}

\title[]{Recovering $U(n)$-invariant toric K\"ahler metrics on $\bbC\bbP^n$ from the torus equivariant spectrum}

\author{Tom\' as A. Reis}
\address{Departamento de F\' isica, Instituto Superior T\'{e}cnico, Av. Rovisco Pais, 1049-001 Lisboa, Portugal}
\email{tomas.a.reis@tecnico.ulisboa.pt}

\author{Rosa Sena-Dias}
\address{Centro de An\'alise Matem\'atica, Geometria e Sistemas Din\^amicos, Departamento de Matem\'{a}tica, Instituto Superior T\'{e}cnico, Av. Rovisco Pais, 1049-001 Lisboa, Portugal}
\email{rsenadias@math.ist.utl.pt}
\thanks{TR was partially supported by the Gulbenkian Foundation through a ``Novos talentos em Matem\' atica" grant. RSD was partially supported by FCT/Portugal through projects PEst-OE/EEI/LAOO9/2013, EXCL/MAT-GEO/0222/2012 and PTDC/MAT/117762/2010}
\subjclass[2010]{Primary 58J50; Secondary 81Q20; 53D20}

\date{}

\begin{abstract}
In this note we prove that toric K\"ahler metrics on complex projective space which are also $U(n)$-invariant are determined by their \emph{equivariant} spectrum i.e. the list of eigenvalues of the Laplacian together with weights of the torus representation on the eigenspaces.
\end{abstract}

\maketitle

\section{Introduction}

Given a compact Riemannian manifold, its Laplacian is a self-adjoint non-negative operator whose spectrum is a discrete subset of $\bbR_0^+$.  A classical question in Riemannian Geometry is ``Does the spectrum of the Laplacian on a Riemannian manifold determine the Riemannian manifold?". Although the answer to this question is no (see \cite{m}), there are several special metrics or classes of metrics that are known to be determined by the spectrum of the Laplacian and there are also known classes of Riemannian manifolds among which the spectrum determines the underlying manifold or the metric. In fact, this question is one of the main objects of study in spectral geometry. For instance the Fubiny-Study metric on $\bbC\bbP^n$ is determined by its spectrum among all K\"ahler metrics if $n\leq 6$ (\cite{t}). Recently there has been an interest in studying inverse spectral problems on Riemannian manifolds with an isometric group action for the so-called \emph{equivariant spectrum} which is roughly the spectrum together with a record of the induced action of the group on the eigenspaces. We will give more details on this ahead. In particular, there have been some equivariant inverse spectral results for toric K\"ahler manifolds. Toric manifolds are symplectic manifolds with an effective, Hamiltonian action of a torus of maximal dimension. These can be given  K\"ahler structures and the question is to what extent is the toric K\"ahler manifold determined by the equivariant spectrum of the Laplacian of the metric associated to the toric K\"ahler structure (see \cite{a}, \cite{dgs1}, \cite{dgs3}). 

The question we want to address here is an equivariant inverse spectral question for \emph{metrics} i.e. the underlying manifold is fixed, in our case it is $\bbC \bbP^n$, and we want to know if the Riemannian metric on it is determined by the equivariant spectrum of its Laplacian. This sort of question is known to be hard and there are not many results in this direction. In \cite{z}, \cite{dgs3}, \cite{dms} there are a few results concerning $S^1$-invariant metrics on $S^2$. The results in \cite{dgs3} and \cite{dms} concern the equivariant spectrum while those in \cite{z} concern the spectrum. In this note we will use some of the general results derived in \cite{dgs3} to prove a positive inverse equivariant spectral result for $U(n)$-invariant metrics on  $\bbC \bbP^n, \,n\geq 2$ . Namely we prove the following.

\begin{thm}\label{main}
Let $n\geq 2$. An $U(n)$-invariant  toric K\"ahler metric on  $\bbC \bbP^n$ is uniquely determined by the torus equivariant spectrum of the Laplacian.
\end{thm}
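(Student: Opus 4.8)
The plan is to combine the Guillemin--Abreu symplectic description of toric K\"ahler metrics with the equivariant heat-trace machinery of \cite{dgs3}, reducing everything to a one-dimensional inverse problem.

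First I would fix coordinates. By the Delzant and Guillemin--Abreu correspondence, a toric K\"ahler metric on $\bbC\bbP^n$ is encoded by a symplectic potential $u$, a convex function on the interior of the standard simplex $\Delta=\{x\in\Rn:\ x_i\geq 0,\ \sum_i x_i\leq 1\}$ satisfying the Guillemin boundary conditions, with the metric given in action--angle coordinates by $\hess u$ and its inverse. I would then impose $U(n)$-invariance and show that it forces $u$ to depend on $x$ only through the single radial variable $r=\sum_{i=1}^n x_i$ (modulo the fixed singular Guillemin terms), so that the metric is completely determined by a single convex profile function $h$ of one variable on $[0,1]$. Under this reduction the $n+1$ torus fixed points, i.e.\ the vertices of $\Delta$, collapse to the two endpoints $r=0$ and $r=1$: the vertex $0$ is the isolated $U(n)$-fixed point $[1:0:\cdots:0]$, while $e_1,\dots,e_n$ form a single orbit of the Weyl group $S_n$ of $U(n)$ inside the invariant $\bbC\bbP^{n-1}$.

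Next I would extract spectral data. Since the Laplacian of a $U(n)$-invariant metric commutes with the group action, its eigenspaces split into weight spaces, and the torus equivariant spectrum is exactly the list of pairs $(\lambda,\alpha)$ recording, for each weight $\alpha$, the spectrum of a reduced radial operator $L_\alpha$ acting on functions of $r$. Equivalently, Fourier analysis in the torus variable of the equivariant heat trace $Z(t,\theta)=\sum_{\lambda,\alpha} m_{\lambda,\alpha}e^{-t\lambda}e^{i\alpha\cdot\theta}$ isolates, for every $\alpha$, the heat trace $Z_\alpha(t)$ of $L_\alpha$. This is where the general results of \cite{dgs3} enter: the small-$t$ asymptotics of the equivariant heat trace localize at the fixed points and, together with the dependence on $\alpha$ as $|\alpha|\to\infty$, furnish a whole family of heat invariants of the operators $L_\alpha$ — vastly more information than the ordinary spectrum, whose finitely many curvature integrals could never pin down a non-analytic profile.

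Finally I would reconstruct $h$ from this family of invariants. The coefficients of the singular Sturm--Liouville operators $L_\alpha$ are explicit expressions in $h$ and its derivatives, so the task is to show that the collection of their heat invariants, indexed by weight and by order in $t$, determines $h$ uniquely on all of $[0,1]$ together with its prescribed endpoint behavior. I expect this reconstruction to be the main obstacle: unlike the recovery of jets at the fixed points, which is immediate once the local invariants are in hand, recovering the global profile requires controlling the singular endpoint behavior of $L_\alpha$ and exploiting the semiclassical ($|\alpha|\to\infty$) asymptotics to convert the discrete invariants into pointwise information about $h$. Once injectivity of the map $h\mapsto\{L_\alpha\}\mapsto(\text{equivariant spectrum})$ is established, two $U(n)$-invariant toric K\"ahler metrics with the same equivariant spectrum must share the same profile and hence coincide, proving Theorem \ref{main}.
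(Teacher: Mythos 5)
There are two genuine gaps here, and together they mean the proposal does not go through as written. First, your reduction to radial operators $L_\alpha$ is incorrect for $n\geq 2$: the equivariant spectrum records the weights of the \emph{torus} $\bbT^n$, and fixing a weight $\alpha\in\bbZ^n$ reduces the eigenvalue problem to an operator on the $n$-dimensional moment polytope (functions of all of $x_1,\dots,x_n$), not to a Sturm--Liouville operator on functions of $r=\sum x_i$. A one-variable reduction would require decomposing $L^2(\bbC\bbP^n)$ into $U(n)$-isotypic components, but the torus-equivariant spectrum does not contain that information: many distinct $U(n)$-types contribute to the same torus weight, and nothing in the hypothesis lets you separate them. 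So the objects $L_\alpha$ whose ``heat invariants'' you propose to exploit are not one-dimensional, and the conflation of $U(n)$-equivariant with $\bbT^n$-equivariant data is a real error, not a simplification.

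Second, even setting that aside, the reconstruction step --- which you yourself flag as the main obstacle --- is exactly the hard content, and the mechanism you gesture at does not supply it. Small-$t$ heat-trace localization at fixed points yields only finitely many local invariants per order, i.e.\ jets of the metric at the fixed points, which cannot determine a non-analytic profile $h$ globally; and ``the dependence on $\alpha$ as $|\alpha|\to\infty$'' is precisely the semiclassical input that must be made quantitative. The paper does this concretely: it invokes the invariant of \cite{dgs3} in the form $\int_{\cp\times\bbR^n}\rho(\alpha^t\hess(\s)\alpha+\xi^t\hess^{-1}(\s)\xi)\,dx\,d\xi$, integrates out $\xi$ (an Abel-type inversion in the fibre variable, Theorem \ref{thm_spec_inv}) to obtain that $\int_{\cp}\rho(\alpha^t\hess(\s)\alpha)\sqrt{\det\hess(\s)}\,dx$ is spectrally determined for \emph{all} $\alpha$ and all $\rho\in\Cinf(\bbR_0^+)$, and then makes the decisive choice $\alpha=(1,-1,0,\dots,0)$ --- available only for $n\geq2$ --- so that the argument of $\rho$ becomes $\frac{1}{x_1}+\frac{1}{x_2}$, \emph{independent of $h$}, while $h''$ survives only in the density through $V(\mu_n)=\sqrt{\frac{1}{1-\mu_n}+\mu_n h''(\mu_n)}$. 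An explicit change of variables then exhibits the invariant as an $(n-1)$-fold iterated Abel transform of $V$, and invertibility of the Abel transform recovers $h''$, hence the metric. Your proposal contains no analogue of this decoupling of the unknown $h$ from the ``test'' direction $\alpha$, nor any concrete inversion; without such a mechanism the map from $h$ to your family of invariants has no demonstrated injectivity, so the proof is incomplete at its central step.
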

The case $n=1$ corresponds to $S^1$-invariant metric on $\bbC \bbP^1=S^2$. Inverse spectral results for $S^1$-invariant metric on $S^2$ using the equivariant spectrum were previously obtained in \cite{dgs3} and \cite{dms}.

The main ingredient involved in the proof of the above theorem is a spectral invariant that can be derived from the spectral invariants discovered in \cite{dgs3}. Some of the invariants were made explicit in the case of toric manifolds. Toric manifolds are symplectic with a Hamiltonian action from a torus. Because it is Hamiltonian, the action admits a moment map whose image we call the moment polytope. It has been shown (see \cite{g}) that toric manifolds admit compatible torus-invariant K\"ahler structures. These are parametrized by functions on the open moment polytope called symplectic potentials (see \cite{a}). We will give more details on this ahead.  $\bbR_0^+$ denotes the set of non-negative real numbers. 

\begin{thm}\label{thm_spec_inv}
Let $X^{2n}$ be a toric manifold with moment polytope $\cp$. Let $X$ be endowed with a toric K\"ahler structure determined by a symplectic potential $\s:\interior({\cp})\rightarrow \bbR$. Then the following quantity is determined by the equivariant spectrum of the Riemannian metric associated with the K\"ahler structure on $X$
\begin{equation}\label{spec_inv}
\int_{\cp}\rho(\alpha^t \hess(\s)\alpha)\sqrt{\det  \hess(\s)}dx
\end{equation}
for all $\alpha\in \bbR^n$ and $\rho\in \Cinf(\bbR_0^+)$.
\end{thm}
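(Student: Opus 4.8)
The plan is to read off the quantity (\ref{spec_inv}) from the large-weight asymptotics of the spectra of $\Lap$ on the isotypic components of $L^2(X)$; throughout write $G:=\hess(\s)$. First I would decompose $L^2(X)=\bigoplus_{\mu\in\bbZ^n}H_\mu$ (Hilbert space direct sum) into weight spaces for the torus action, where $H_\mu$ is the space of functions on which $T^n$ acts through the character $\mu$. Since $\Lap$ commutes with the torus action it preserves each $H_\mu$, and knowing the equivariant spectrum is exactly knowing, for every $\mu$, the list with multiplicities of the eigenvalues of $\Lap|_{H_\mu}$. In action--angle coordinates $(x,\theta)\in\interior(\cp)\times T^n$ a weight-$\mu$ function is $f(x)e^{i\mu\cdot\theta}$, and since the toric K\"ahler metric is $\sum_{ij}G_{ij}\,dx_idx_j+\sum_{ij}(G^{-1})_{ij}\,d\theta_id\theta_j$ a direct computation identifies $\Lap|_{H_\mu}$ with the Schr\"odinger-type operator
\begin{equation*}
\Lap_\mu=-\sum_{ij}\partial_{x_i}\!\big((G^{-1})_{ij}\,\partial_{x_j}\cdot\big)+\mu^tG\mu
\end{equation*}
on $\interior(\cp)$, whose principal symbol is $\xi^tG^{-1}\xi$ and whose potential is $V_\mu(x)=\mu^tG(x)\mu$. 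This is where the expressions $\alpha^tG\alpha$ and $G^{-1}$ in (\ref{spec_inv}) originate.

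Next I would pass to a semiclassical regime. For a rational direction $\alpha\in\bbR^n$, let $\mu=N\alpha$ run through the lattice points along that ray, set $h=1/N$, and consider $H_h:=h^2\Lap_{N\alpha}$. Because $h^2V_{N\alpha}=h^2N^2\,\alpha^tG\alpha=\alpha^tG\alpha$, the operator $H_h$ is a semiclassical Schr\"odinger operator with semiclassical principal symbol $p(x,\xi)=\xi^tG^{-1}\xi+\alpha^tG\alpha$. Its eigenvalues are $h^2\lambda_{N\alpha,k}$, so for every $\rho\in\Cinf(\bbR)$ the trace $\operatorname{Tr}\rho(H_h)=\sum_k\rho(h^2\lambda_{N\alpha,k})$ is a finite sum determined by the equivariant spectrum. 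The semiclassical Weyl law---which is where I would invoke the invariants of \cite{dgs3}---then yields
\begin{equation*}
\lim_{h\to0}(2\pi h)^n\operatorname{Tr}\rho(H_h)=\int_{\cp}\int_{\bbR^n}\rho\big(\xi^tG^{-1}(x)\xi+\alpha^tG(x)\alpha\big)\,d\xi\,dx,
\end{equation*}
whose left-hand side is a spectral invariant. Integrating out the momentum by the substitution $\xi=G^{1/2}\eta$ diagonalises the symbol and produces the Jacobian $\sqrt{\det G}$, so the right-hand side equals $\int_{\cp}\tilde\rho(\alpha^tG\alpha)\sqrt{\det G}\,dx$ with $\tilde\rho(s):=\int_{\bbR^n}\rho(|\eta|^2+s)\,d\eta$; in the original notation this is precisely (\ref{spec_inv}) with test function $\tilde\rho$.

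It then remains to remove the two restrictions. Up to a constant $\tilde\rho(s)=c_n\int_s^\infty\rho(v)(v-s)^{n/2-1}\,dv$ is a fractional (Abel/Weyl) integral of $\rho$; it is injective with an explicit inverse, so as $\rho$ varies over $\Cinf(\bbR)$ the function $\tilde\rho$ exhausts $\Cinf(\bbR_0^+)$ and (\ref{spec_inv}) is recovered for every admissible test function. Continuity of $\alpha\mapsto$ (\ref{spec_inv}) then upgrades the result from rational $\alpha$ to all $\alpha\in\bbR^n$.

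The hard part will be the semiclassical Weyl asymptotics for $\Lap_\mu$ uniformly up to the boundary: $G=\hess(\s)$ degenerates on $\partial\cp$ (the torus orbits collapse there and the symplectic potential carries the standard logarithmic boundary singularities), so $\Lap_\mu$ is a genuinely singular operator on $\interior(\cp)$ and one must show that the naive phase-space integral still captures the leading term. The redeeming feature is that the potential $V_\mu=\mu^tG\mu$ is confining---it blows up at $\partial\cp$---so eigenfunctions concentrate in the interior and the boundary contributes only at lower order; turning this heuristic into rigorous remainder estimates (via Agmon-type bounds, or by citing the analytic groundwork in \cite{dgs3}) is the crux of the argument.
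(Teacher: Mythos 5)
Your proposal is correct, and after the first stage it runs on the same rails as the paper; the difference is where you place the load-bearing input. The paper simply quotes Lemma 6.14 of \cite{dgs3}, which states verbatim that $\int_{\cp\times\bbR^n}\rho(\alpha^t\hess(\s)\alpha+\xi^t\hess^{-1}(\s)\xi)\,dx\,d\xi$ is equivariantly spectrally determined for generic $\alpha$, extends to all $\alpha$ by continuity, and then does exactly your fiber reduction --- diagonalizing $\hess^{-1}(\s)$ by an orthogonal change of frame and rescaling, which is your substitution $\xi=G^{1/2}\eta$ written in two steps, producing the Jacobian $\sqrt{\det\hess(\s)}$. You instead sketch a re-derivation of the \cite{dgs3} input: the weight-space decomposition of $L^2(X)$, the identification of $\Lap|_{H_\mu}$ with the Schr\"odinger operator $-\partial_{x_i}(G^{-1})_{ij}\partial_{x_j}+\mu^tG\mu$, and the semiclassical Weyl law along the ladder $\mu=N\alpha$, $h=1/N$; this is indeed the mechanism behind the cited result, and you correctly identify the genuine analytic difficulty --- the degeneration of $G$ at $\partial\cp$ and the need for uniform remainder estimates --- which is precisely what \cite{dgs3} supplies and what the paper avoids re-proving. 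The other divergence is the final inversion: where you observe that $\rho\mapsto\tilde\rho$, $\tilde\rho(s)=c_n\int_s^\infty\rho(v)(v-s)^{n/2-1}dv$, is a Weyl fractional integral of order $n/2$ with explicit inverse (differentiation for even $n$, Abel inversion for half-integer order), the paper proves surjectivity of the radial transform by induction on $n$: integration by parts lowers $n$ by two per step, and the base case $n=1$ is handled via the Abel transform and the identity $f=\frac{d}{dx}J(J(f))$, with an explicit check that compact support is preserved. Your closed-form inversion is slicker but should include that support check (the paper verifies that the inverted function still lies in $\Cinf(\bbR_0^+)$, using that $\tilde\rho$ vanishes near the relevant endpoint); this is a one-line verification in your framework, not a gap in the approach.
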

The proof of this theorem follows from results and techniques in \cite{dgs3} but since the above was never stated in \cite{dgs3}, we give a proof here for the sake of completeness using the results in \cite{dgs3}.

{\noindent \textbf{Acknowledgements.} This work was carried out in the context of the ``Novos talentos em Matem\' atica" program. We would like to thank the Gulbenkian Foundation and the program organizers for all their support. }

\section{Background}
The goal of this section is to quickly review the necessary background involved in the statement and the proof of our main theorem \ref{main}. We will be brief and will assume some familiarity with both spectral geometry and toric geometry. The reader is referred to \cite{dgs3} for more background on equivariant spectral geometry and \cite{g}, \cite{a} for more on toric K\"ahler geometry.
\subsection{Spectral Geometry}
Let $(X,ds^2)$ be a Riemannian compact manifold. There is a well defined Laplace operator which is self-adjoint and positive. The set of eigenvalues of the Laplace operator counted with multiplicities is called the \emph{spectrum} of the Riemannian manifold. It is well known that 
\begin{itemize}
\item The spectrum is a discrete subset of $\bbR_0^+$. 
\item Each eigenvalue of the Laplacian has a {finite}-dimensional eigenspace.
\end{itemize}

Let $G$ be a group acting on $(X,ds^2)$ by isometries. Then, because each group-induced isometry commutes with the Laplace operator, $G$ induces an action on each eigenspace i.e. a $G$-representation. The set of all eigenvalues of the Laplace operator together with the corresponding isomorphism class of the $G$-representation  on the corresponding eigenspace is called the \emph{$G$-equivariant spectrum}. 

We will mainly be concerned with the case where $G$ is a torus $\bbT^n$. In this case, isomorphism classes of representations are given by a list of integers called the weights which correspond to eigenvalues of the $\bbT^n$ action on the eigenspace. To be more precise, let $m$ be an integer and $w=(w_1,\cdots w_m),\, w_i\in \bbZ^n, \,i=1,\cdots, m$. Then we can define a real representation of $\bbT^n$ on $\bbC^m$,  $\psi_w:\bbT^n\rightarrow \text{GL}(m)$ to be the group homeomorphism given by 
$$
\psi_w(e^{\sqrt{-1}\theta})=
\begin{pmatrix}
&\diagentry{e^{\sqrt{-1}\theta\cdot w_1}}\\
&&\diagentry{\xddots}\\
&&&\diagentry{e^{\sqrt{-1}\theta\cdot w_m}}&\\
\end{pmatrix},
 \, \theta\in \bbR^n.
$$
Every $\bbT^n$ representation is isomorphic to one of the $\psi_w$ above and the $w$ are called the weights of the representation. 

If a Riemannian manifold admits an isometric torus action, its equivariant spectrum is the spectrum together with the weights of the torus representation on each eigenspace. 

In \cite{dgs3}, the authors define a family of equivariant spectral measures associated to any isometric torus action on a Riemannian manifold. More precisely, let $(X,ds^2,\bbT^n)$ be a Riemannian manifold with an isometric torus action.

There is a well defined spectral measure, $\mu(\alpha,\hbar)$ that ``counts" $\frac{\alpha}{\hbar}$-equivariant eigenfunctions of the operator $\hbar^2 \Lap$, where $\hbar$ is a small real number and $\Lap$ is the Laplacian of the Riemannian manifold. The main result in \cite{dgs3} is an asymptotic expansion for $\mu(\alpha,\hbar)$ as $\hbar$ tends to zero. The leading order term of this asymptotic can be written down explicitly. Since the measure depends only the $\bbT^n$-equivariant spectrum, the leading order term in the asymptotic expansion is an equivariant spectral invariant meaning that it only depends on the equivariant spectrum of the Riemannian manifold and not on the full manifold. We will not write this equivariant spectrum invariant here in full generality. Instead we will say that in some cases it assumes a very simple expression. One such case is that of toric manifolds. This is also the main application in \cite{dgs3}.
\subsection{Toric Geometry}
A toric manifold is a symplectic manifold $(X^{2n},\omega)$ admitting an effective  Hamiltonian $\bbT^n$-action. Such an action admits a moment map $\phi:X\rightarrow \bbR^n$, where we have identified the Lie algebra of $\bbT^n$ with $\bbR^n$.  It is a well known theorem, due to Atiyah-Bott and Guillemin-Sternberg, that the image of this moment map is convex and in fact it is convex polytope of Delzant type (see \cite{g} for a definition). A Delzant polytope can be written as
\begin{equation}\label{delzant_polytope}
\{x\in\bbR^n: l_i(x)=\langle x,u_i\rangle-\lambda_i\geq0, \,i=1,\cdots d\},
\end{equation}
where $u_i$ are vectors in $\bbR^n$ normal to the facets of the Delzant polytope. The Delzant condition can be expressed in terms of the $u_i$. This Delzant polytope $\cp$, determines $(X^{2n},\omega)$ up to equivariant symplectomorphism. 

There is an open dense set on the toric manifold $X$ which can be identified with with $\cp\times \bbT^n$ in an equivariant way ($\bbT^n$ acts on $\cp\times \bbT^n$  on the second factor via multiplication). This identification gives coordinates on this open dense set of $X$. Such coordinates are called \emph{action-angle} coordinates (see \cite{a} for more details).

In this paper we are only concerned with the simplest example of a toric manifold, namely that of $\bbC\bbP^n$. 
\begin{exa}
Let $\bbC\bbP^n=S^{2n+1}/S^1$ be complex projective space endowed with the usual Fubiny-Study metric. A point in $\bbC\bbP^n$ has homogeneous coordinates $[z_0:z_1\cdots:z_n]$ where $\sum_{i=0}^n|z_i|^2=1$ so that {$[e^{\sqrt{-1}\theta}z_0:e^{\sqrt{-1}\theta}z_1\cdots:e^{\sqrt{-1}\theta}z_n]=[z_0:z_1\cdots:z_n]$}, for any $\theta\in \bbR$. Consider the action of $(S^1)^n=\bbT^n$ on $\bbC\bbP^n$ given by
$$
(e^{\sqrt{-1}\theta_1},\cdots,e^{\sqrt{-1}\theta_n})\cdot[z_0:z_1\cdots:z_n]=[z_0:e^{\sqrt{-1}\theta_1}z_1\cdots:e^{\sqrt{-1}\theta_n}z_n], 
$$
where $(\theta_1,\cdots,\theta_n)\in \bbR^n, [z_0:z_1\cdots:z_n]\in \bbC\bbP^n$. The moment map is $\phi:\bbC\bbP^n\rightarrow \bbR^n$ given by
$$
\phi([z_0:z_1\cdots:z_n])=(|z_1|^2,\cdots,|z_n|^2), \,[z_0:z_1\cdots:z_n]\in \bbC\bbP^n,
$$
and the moment map image is 
$$
\cp=\{(x_1,\cdots,x_n)\in\bbR^n: x_i\geq0,\,i=1,\cdots,n,\,\sum_{i=1}^nx_1\leq1\},
$$ i.e. the standard simplex in $\bbR^n$. The action-angle coordinates of $[z_0:z_1\cdots:z_n]$ are $(|z_1|^2,\cdots,|z_n|^2)$ and $(\frac{z_1/|z_1|}{z_0/|z_0|},\cdots,\frac{z_n/|z_n|}{z_0/|z_0|})$ on the open subset of $\bbC\bbP^n$ where $z_0\ne0$.
\end{exa}

Toric K\"ahler metrics on a given toric manifold $(X,\omega)$ are K\"ahler metrics compatible with $\omega$ which are invariant under the torus action. In \cite{g}, Guillemin shows that this set of toric K\"ahler metrics on any given toric manifold $(X,\omega)$ in non-empty by constructing an explicit example of such a metric namely the Guillemin metric. In action-angle coordinates $(x,\theta)$ this metric is given by
$$
\begin{pmatrix}
\phantom{-}\hess(\s_0) & \vdots & 0\  \\
\hdotsfor{3} \\
\phantom{-}0 & \vdots & \hess^{-1}(\s_0)
\end{pmatrix}
$$
where $\s_0$ can be explicitly written down in terms of the defining function for the moment polytope of $X$, $\cp$. More explicitly, $\s_0=\sum_{i=1}^dl_i\log(l_i)-l_i$ where $\cp$ is written as in (\ref{delzant_polytope}). This function is called the Guillemin potential of $\cp$. It has been shown by Abreu that all toric K\"ahler metrics can be explicitly written down in terms of a function on the moment polytope of $X$ as above i.e. toric K\"ahler metrics are given in action angle coordinates by 
$$
\begin{pmatrix}
\phantom{-}\hess(\s) & \vdots & 0\  \\
\hdotsfor{3} \\
\phantom{-}0 & \vdots & \hess^{-1}(\s)
\end{pmatrix}
$$
where $\s:\interior(\cp)\rightarrow\bbR$  is called the \emph{symplectic potential} of the toric K\"ahler metric and it satisfies the following properties
\begin{itemize}
\item $\s$ is strictly convex,
\item $\s-\s_0$ extends to a smooth function on $\cp$,
\item There exists a function $\delta\in \Cinf(\cp)$ such that $\det \hess \s=\left(\delta\prod_{i=1}^dl_i\right)^{-1}$.
\end{itemize}
We denote the set of all such symplectic potentials by $\spot(\cp)$.
\begin{exa}
In the case of $\bbC\bbP^n$ the Guillemin potential is given by
$$
\s_0=\sum_{i=1}^n\left(x_i\log(x_i)-x_i\right)+ \left(1-\sum_{i=1}^nx_i\right)\log\left(1-\sum_{i=1}^nx_i\right)-\left(1-\sum_{i=1}^nx_i\right).
$$
\end{exa}
The space $\bbC\bbP^n$ also admits an action of $U(n)$. In fact, it is clear from the description of $\bbC\bbP^n$ using homogeneous coordinates that $\bbC\bbP^n$ admits an action of $U(n+1)$. This action is non-effective and has an $S^1$ (the diagonal $S^1$) acting trivially. Since $U(n)$ embeds into $U(n+1)$ (in several ways) we get a $U(n)$-action. Toric K\"ahler metric which are also invariant under the action of $U(n)$ are the ones whose symplectic potentials are of the form $\s_0(x)+h\left(\sum_{i=1}^nx_i\right)$. They are also cohomogeneity one metrics and they play an important role in K\"ahler geometry (see \cite{d}, \cite{gz} or \cite{a2} for instance).

\section{Equivariant spectral invariants on toric manifolds}
The goal of this section is to prove Theorem \ref{thm_spec_inv}. This follows by manipulating the equivariant spectral invariants for toric manifolds that were written down in \cite{dgs3}. We start by stating the result for toric manifolds in [\cite{dgs3} Lemma 6.14].
\begin{thm}[Dryden-Guillemin-Sena-Dias] Let $X^{2n}$ be a toric manifold with moment polytope $\cp$. Suppose $X$ is endowed with a toric K\"ahler metric with symplectic potential $\s$. Then 
$$
\int_{\cp\times \bbR^{n}}\rho(\alpha^t\hess(\s)\alpha+\xi^t\hess^{-1}(\s)\xi)dxd\xi
$$
is determined by the equivariant spectrum for all generic $\alpha \in \bbR^n$ and for all $\rho \in \Cinf(\bbR_0^+)$.
\end{thm}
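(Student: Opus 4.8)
The plan is to derive the stated identity from the main asymptotic expansion of \cite{dgs3} by specializing it to the toric setting and computing its leading coefficient explicitly. Recall from \cite{dgs3} that the equivariant spectrum determines the family of spectral measures $\mu(\alpha,\hbar)$, and that each pairing $\int\rho\,d\mu(\alpha,\hbar)$ admits an asymptotic expansion in $\hbar$ whose leading-order coefficient is therefore an equivariant spectral invariant. Here $\mu(\alpha,\hbar)$ counts, with multiplicity, the eigenvalues of the semiclassical operator $\hbar^2\Lap$ on the subspace of eigenfunctions transforming under $\bbT^n$ with weight $\approx\alpha/\hbar$. The goal is to show that for a toric K\"ahler metric this leading coefficient equals a universal nonzero constant times the displayed integral; the invariance of the integral then follows at once.

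First I would write down the principal symbol of $\Lap$ on $T^*X$ in action-angle coordinates. Over the open dense orbit $\interior(\cp)\times\bbT^n$, with cotangent fibre coordinates $(\xi,\eta)$ dual to $(x,\theta)$, the block-diagonal form of the metric gives the cometric $\mathrm{diag}(\hess^{-1}(\s),\hess(\s))$ and hence the symbol
\[
H(x,\theta,\xi,\eta)=\xi^t\hess^{-1}(\s)\xi+\eta^t\hess(\s)\eta.
\]
The cotangent lift of the $\bbT^n$-action is translation in $\theta$, and its moment map is the fibre coordinate $\eta$. Thus fixing the torus weight to be $\approx\alpha/\hbar$ corresponds, in the semiclassical limit, to localizing onto the level set $\{\eta=\alpha\}$ of this moment map.

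Next I would invoke the equivariant Weyl law that is the technical core of \cite{dgs3}: the leading term of $\int\rho\,d\mu(\alpha,\hbar)$ is, up to a fixed power of $\hbar$ and a universal constant, the integral of $\rho\circ H$ over the Marsden--Weinstein reduction $\{\eta=\alpha\}/\bbT^n$ against the reduced Liouville measure. Performing this reduction is straightforward: setting $\eta=\alpha$ and quotienting by the $\theta$-translations removes the angle variables (contributing a factor $\vol(\bbT^n)=(2\pi)^n$) and leaves the coordinates $(x,\xi)\in\interior(\cp)\times\bbR^n$ with reduced Liouville measure $dx\,d\xi$. On this reduced space the symbol specializes to
\[
H|_{\eta=\alpha}=\alpha^t\hess(\s)\alpha+\xi^t\hess^{-1}(\s)\xi,
\]
which is exactly the argument of $\rho$ in the statement. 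Since the complement of the open dense orbit has measure zero, the $x$-integral over $\interior(\cp)$ agrees with that over $\cp$, and the leading coefficient is therefore a constant multiple of $\int_{\cp\times\bbR^n}\rho(\alpha^t\hess(\s)\alpha+\xi^t\hess^{-1}(\s)\xi)\,dx\,d\xi$.

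The genericity hypothesis on $\alpha$ enters precisely at the reduction step: for $\alpha$ a regular value of the cotangent moment map the reduction is clean and the reduced Liouville measure is unambiguous, whereas at singular values (coming from the lower strata over the facets of $\cp$, where the isotropy is nontrivial) the reduced space acquires singularities and the correspondence between the spectral asymptotic and the phase-space integral breaks down. The main obstacle I anticipate is justifying this passage from the equivariant spectral measure to the reduced phase-space integral, i.e.\ importing the microlocal argument underlying the \cite{dgs3} expansion and checking that it applies stratum-by-stratum along the noncompact fibre direction. Convergence of the $\xi$-integral over $\bbR^n$ is not a difficulty, since $\hess^{-1}(\s)$ is positive definite, so $H|_{\eta=\alpha}\to\infty$ as $|\xi|\to\infty$ and $\rho\circ H$ is integrable for $\rho\in\Cinf(\bbR_0^+)$. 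Once the leading coefficient is matched to the stated integral up to the explicit constant $\vol(\bbT^n)=(2\pi)^n$ and the fixed $\hbar$-power, dividing out that constant shows the integral is determined by the equivariant spectrum for every such $\rho$ and every generic $\alpha$, as claimed.
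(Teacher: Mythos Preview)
The paper does not prove this theorem at all: it is simply quoted as Lemma~6.14 of \cite{dgs3}, with the sentence ``We start by stating the result for toric manifolds in [\cite{dgs3} Lemma~6.14]'' and no further argument. So there is no proof in the paper to compare your proposal against.

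That said, your sketch is a faithful summary of how the result is obtained in \cite{dgs3}: one computes the principal symbol of $\hbar^2\Lap$ in action-angle coordinates, identifies the cotangent moment map with the angular momentum $\eta$, reduces at $\eta=\alpha$ for generic $\alpha$, and reads off the leading coefficient of the equivariant Weyl asymptotic as the integral of $\rho\circ H|_{\eta=\alpha}$ over the reduced phase space $\interior(\cp)\times\bbR^n$ with Liouville measure $dx\,d\xi$. Your identification of the genericity hypothesis with regularity of the reduction and your observation that the $\xi$-integral converges because $\hess^{-1}(\s)$ is positive definite are both correct. The only caveat is that what you label ``the main obstacle'' --- the microlocal justification of the equivariant Weyl law and its behaviour near the singular strata of the moment polytope --- is precisely the technical content of \cite{dgs3}, so your argument is really a reduction to that paper rather than an independent proof; but that is exactly the status the present paper gives it as well.
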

We are now in a position to prove Theorem \ref{thm_spec_inv}.
\begin{proof}(of Theorem \ref{thm_spec_inv})
We start by noting that by continuity 
$$
\int_{\cp\times \bbR^{n}}\rho(\alpha^t\hess(\s)\alpha+\xi^t\hess^{-1}(\s)\xi)dxd\xi
$$ is actually determined by the equivariant spectrum for \emph{all} $\alpha \in \bbR^n$ and for all $\rho \in \Cinf(\bbR_0^+)$. 
Let $\alpha$ be any fixed vector in $\bbR^n$ and let $F \in \Cinf(\bbR_0^+)$.
$$
\int_{\cp\times \bbR^{n}}F(\alpha^t\hess(\s)\alpha+\xi^t\hess^{-1}(\s)\xi)dxd\xi
$$ 
is determined. We want to simplify this integral using a change of variables. First note that because $\hess^{-1}(\s)$ is symmetric there is a unitary matrix $S(x)$ which diagonalizes $\hess^{-1}(\s)$ i.e. such that
$$
S(x)\hess^{-1}(\s)(x)S^{-1}(x)=\begin{pmatrix}
\diagentry{\lambda_1(x)}\\
&\diagentry{\xddots}\\
&&\diagentry{\lambda_n(x)}\\
\end{pmatrix}
$$
where $\lambda_1(x),\cdots, \lambda_n(x)$ are the positive eigenvalues of $\hess^{-1}(\s)(x)$. Set $y=S\xi$. Because $S$ is unitary we can change variables in the above integral to get 
$$
\int_{\cp\times \bbR^{n}}F(\alpha^t\hess(\s)\alpha+\sum_{i=1}^n\lambda_i(x)y_i^2)dxdy.
$$
Now set $u_i=\sqrt{\lambda_i}y_i, \, i=1,\cdots, n$ and change variables again to conclude that 
$$
\int_{\cp\times \bbR^{n}}\frac{F(\alpha^t\hess(\s)\alpha+\sum_{i=1}^nu_i^2)}{\sqrt{\prod_{i=1}^n\lambda_i}}dxdu,
$$
is determined by the equivariant spectrum. Note that $\prod_{i=1}^n\lambda_i=\det \hess^{-1}(\s)(x)$ so that the above can rewritten as 
$$
\int_{\cp\times \bbR^{n}}F(\alpha^t\hess(\s)\alpha+\sum_{i=1}^nu_i^2)\sqrt{\det \hess(\s)}dxdu.
$$
Next set $r$ to be the radial coordinate in $\bbR^n$ so that $\sum_{i=1}^nu_i^2=r^2$. We see that 
$$
\int_{\cp}\left(\int_0^\infty F(\alpha^t\hess(\s)\alpha+r^2)r^{n-1}dr\right)\sqrt{\det \hess(\s)}dx,
$$
is determined by the equivariant spectrum. Because $F$ has compact support, the function $\int_0^\infty F(\alpha^t\hess(\s)\alpha+r^2)r^{n-1}dr$ is a well defined function of $\alpha^t\hess(\s)\alpha$. We can write 
$$
\rho(t)=\int_0^\infty F(t+r^2)r^{n-1}dr,
$$
for some $\rho \in \Cinf(\bbR_0^+)$. To prove our theorem, it will be enough to show that, given any $\rho\in\Cinf(\bbR_0^+)$, there is an $F \in\Cinf(\bbR_0^+)$, such that 
$$
\rho(t)=\int_0^\infty F(t+r^2)r^{n-1}dr.
$$
We will do this by induction on $n$. 
\begin{itemize}
\item First we treat the case $n=1$. We would like to show that, given any $\rho\in \Cinf(\bbR_0^+)$, there is $F\in \Cinf(\bbR_0^+)$, such that 
$$
\rho(t)=\int_0^\infty F(t+r^2)dr.
$$
Set $\nu=\frac{1}{t+r^2}$. Then 
$$
\int_0^\infty F(t+r^2)dr=\frac{1}{2\sqrt{t}}\int_0^{\frac{1}{t}}\frac{F\left(\frac{1}{\nu}\right)d\nu}{\nu\sqrt{\nu}\sqrt{\frac{1}{t}-\nu}}.
$$
The Abel transform of a continuous function $f$ on $\bbR_0^+$ is defined to be
\begin{equation}\label{Abel}
J(f)(x)=\int_0^x\frac{f(\nu)d\nu}{\sqrt{x-\nu}}, \, x\in \bbR^+.
\end{equation}
Now given $F\in\Cinf(\bbR_0^+)$ set
$$
K(F)(\nu)=\frac{F\left(\frac{1}{\nu}\right)}{\nu\sqrt{\nu}}.
$$
We see that 
$$
\int_0^\infty F(t+r^2)dr=\frac{1}{2\sqrt{t}}J(K(F))\left(\frac{1}{t}\right).
$$
and we are trying to solve
$$
\rho(t)=\frac{J(K(F))\left(\frac{1}{t}\right)}{2\sqrt{t}},
$$
which gives
{$$
J(K(F))(t)=\frac{2\rho(\frac{1}{t})}{\sqrt{t}}.
$$}
The upshot is that the Abel transform is known to be invertible (see \cite{gs} for more details on the Abel transform). More precisely
$$
f(x)=\frac{dJ(J(f))(x)}{dx}
$$
for any continuous function $f$, on $\bbR_0^+$. Therefore it follows that 
$$
F(t)=\frac{2}{t\sqrt{t}}\frac{d}{dt} \left[J\left(\frac{\rho\left(\frac{1}{t}\right)}{\sqrt{t}}\right)\right]\left(\frac{1}{t}\right).
$$
Because $\rho$ has compact support, $J\left(\frac{\rho\left(\frac{1}{t}\right)}{\sqrt{t}}\right)$ vanishes in a neighborhood of zero and so $\left[J\left(\frac{\rho\left(\frac{1}{t}\right)}{\sqrt{t}}\right)\right]\left(\frac{1}{t}\right)$ has compact support. This implies that $F$ as defined above has compact support.

\item Next we treat the case $n\geq2$. Assume that for all $l< n$ and all $\rho\in\Cinf(\bbR_0^+)$ there is an $F \in\Cinf(\bbR_0^+)$ such that 
$$
\rho(t)=\int_0^\infty F(t+r^2)r^{l-1}dr.
$$
Now given $\rho\in\Cinf(\bbR_0^+)$, we would like to find $F \in\Cinf(\bbR_0^+)$ such that 
$$
\rho(t)=\int_0^\infty F(t+r^2)r^{n-1}dr.
$$
We have 
\begin{IEEEeqnarray*}{c}
\int_0^\infty F(t+r^2)r^{n-1}dr=\int_0^\infty F(t+r^2)r\cdot r^{n-2}dr\\
=\frac{1}{2}\int_0^\infty \frac{d}{dr}\left(f(t+r^2)\right)r^{n-2}dr,
\end{IEEEeqnarray*}
where $f$ is given by $f(t)=\int_0^tF(\nu)d\nu-\int_0^\infty F(\nu)d\nu$. Because $F$ has compact support, it has finite integral on $\bbR_0^+$ and $f$ also has compact support. Integrating the above expression by parts we see that 
$$
\int_0^\infty F(t+r^2)r^{n-1}dr=\begin{cases}
&-\frac{n-2}{2}\int_0^\infty f(t+r^2)r^{n-3}dr,\, n\geq3\\
&\frac{-f(t)}{2},\, n=2.
\end{cases}
$$
If $n=2$ we can simply set $\rho(t)=-f(t)/2$ i.e. $F(t)=-2\rho'(t)$ which has compact support. If $n\geq3$ we have $\rho(t)= -\frac{n-2}{2}\int_0^\infty f(t+r^2)r^{n-3}dr$ which by induction admits a unique solution $f\in \Cinf(\bbR_0^+)$. Since $F=f'$ the result follows.
\end{itemize}
\end{proof}

\section{Recovering $U(n)$-invariant metrics on $\bbC \bbP^n$ from the spectral invariant }
In this section our goal is to use Theorem \ref{thm_spec_inv} to prove our main result Theorem \ref{main}. We know from Theorem \ref{thm_spec_inv} that the equivariant spectrum of the metric whose symplectic potential is $\s$ determines
$$
\int_{\cp}\rho(\alpha^t \hess(\s)\alpha)\sqrt{\det  \hess(\s)}dx, \, \forall \rho\in \Cinf(\bbR_0^+), \, \alpha\in \bbR^n.
$$
It is then natural to ask the following question
\begin{question}
Let $\cp$ be a Delzant polytope. We define $\F:\spot(\cp)\rightarrow \mathcal{C}^\infty(\bbR^n\times \Cinf(\bbR_0^+))$ by
$$
\F(\s)[\alpha,\rho]=\int_{\cp}\rho(\alpha^t \hess(\s)\alpha)\sqrt{\det  \hess(\s)}dx.
$$
Is $\F$ injective on $\spot(\cp)$ or on a ``reasonably" large subset of $\spot(\cp)$?
\end{question}
We are able to answer this question under some very strong assumptions namely when $\cp$ is the the standard simplex in $\bbR^n$ i.e. the moment polytope of $\bbC \bbP^n$ 
$$
\cp=\{x=(x_1,\cdots, x_n)\in \bbR^n:  x_i\geq i=1,\cdots n, 1-\sum_{i=1}^nx_i\geq 0\},
$$
and $\F$ is restricted to the subset of $\spot(\cp)$ corresponding to $U(n)$-invariant metrics. More precisely $\s$ is of the form
{
\begin{equation}\label{un_potential}
\sum_{i=1}^n\left(x_i\log(x_i)-x_i\right)+\left(1-\sum_{i=1}^nx_i\right)\log\left(1-\sum_{i=1}^nx_i\right)-\left(1-\sum_{i=1}^nx_i\right)+h\left(\sum_{i=1}^nx_i\right)
\end{equation}
}
where $h$ is a smooth function on $\bbR_0^+$. To determine $\s$ it is enough to determine $h$ and so we need to express $\F(\s)$ in terms of $h$ alone.
\begin{lemma}
Let $\cp$ be the standard simplex as above and let $\s \in \spot(\cp)$ be of the form (\ref{un_potential}). Then, given $\alpha \in \bbR^n$, and $\rho \in \Cinf(\bbR_0^+)$,
\begin{IEEEeqnarray}{lcr}\label{Fh}
&\F(\s)=&\int_{\cp}\rho\left( \sum_{i=1}^n\frac{\alpha_i^2}{x_i}+\left(\frac{1}{1-\sum_{i=1}^nx_i}+h''\left(\sum_{i=1}^nx_i\right)\right)\left(\sum_{i=1}^n\alpha_i\right)^2\right)\\ \nonumber
&&\sqrt{\frac{\frac{1}{1-\sum_{i=1}^nx_i}+\sum_{i=1}^nx_ih''\left(\sum_{i=1}^nx_i\right)}{\prod_{i=1}^nx_i}}dx.
\end{IEEEeqnarray}
\end{lemma}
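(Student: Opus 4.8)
The goal is to compute $\F(\s)[\alpha,\rho]$ explicitly when $\s$ has the $U(n)$-invariant form \eqref{un_potential}, which amounts to evaluating the two Hessian-dependent quantities appearing in the definition of $\F$: the quadratic form $\alpha^t\hess(\s)\alpha$ and the determinant $\det\hess(\s)$. The plan is to compute $\hess(\s)$ directly from \eqref{un_potential}, then diagonalize both expressions in terms of the single variable $s:=\sum_{i=1}^n x_i$ on which $h$ depends.

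First I would differentiate \eqref{un_potential} twice. Writing $l_0:=1-\sum_{i=1}^n x_i$, the Guillemin part $\s_0$ contributes $\partial_i\partial_j\s_0 = \delta_{ij}/x_i + 1/l_0$, since $\partial_i\log x_i$ gives a diagonal $1/x_i$ and the $l_0\log l_0 - l_0$ term has second derivatives all equal to $1/l_0$ (because $\partial_i l_0 = -1$ for every $i$). The extra term $h(s)$ has $\partial_i\partial_j h(s) = h''(s)$ for all $i,j$, again because $\partial_i s = 1$. Hence
\begin{equation*}
\hess(\s)_{ij} = \frac{\delta_{ij}}{x_i} + \left(\frac{1}{l_0}+h''(s)\right),
\end{equation*}
that is, a diagonal matrix $D=\operatorname{diag}(1/x_1,\dots,1/x_n)$ plus a rank-one matrix $c\,\mathbf{1}\mathbf{1}^t$ with $c=1/l_0+h''(s)$ and $\mathbf{1}=(1,\dots,1)^t$.

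The quadratic form is then immediate: $\alpha^t\hess(\s)\alpha = \sum_i \alpha_i^2/x_i + c\,(\sum_i\alpha_i)^2$, which is exactly the argument of $\rho$ in \eqref{Fh}. For the determinant I would apply the matrix determinant lemma, $\det(D+c\,\mathbf{1}\mathbf{1}^t) = \det(D)(1+c\,\mathbf{1}^tD^{-1}\mathbf{1})$. Here $\det D = \prod_i 1/x_i = 1/\prod_i x_i$ and $\mathbf{1}^tD^{-1}\mathbf{1} = \sum_i x_i = s$, so
\begin{equation*}
\det\hess(\s) = \frac{1}{\prod_i x_i}\bigl(1 + c\,s\bigr) = \frac{1+\left(\tfrac{1}{l_0}+h''(s)\right)s}{\prod_i x_i}.
\end{equation*}
Since $s/l_0 = s/(1-s)$ and $1 + s/(1-s) = 1/(1-s) = 1/l_0$, the numerator simplifies to $1/l_0 + s\,h''(s)$, which is precisely $\frac{1}{1-\sum x_i}+\sum_i x_i\,h''(\sum x_i)$ as written under the square root in \eqref{Fh}. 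Substituting both computations into the definition of $\F(\s)$ gives \eqref{Fh}.

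I expect the only real obstacle to be bookkeeping rather than anything conceptual: one must be careful that the sign of $\partial_i l_0 = -1$ does not introduce spurious signs in the mixed second derivatives, and that the algebraic simplification $1+cs \mapsto 1/l_0 + s\,h''$ is carried out correctly so the numerator matches the stated form exactly. The matrix determinant lemma does all the heavy lifting for the determinant, so no explicit diagonalization is needed.
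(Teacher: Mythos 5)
Your proposal is correct and follows essentially the same route as the paper: both decompose $\hess(\s)$ as $\operatorname{diag}(1/x_1,\dots,1/x_n)$ plus the rank-one term $\left(\frac{1}{1-\sum x_i}+h''\right)\mathbf{1}\mathbf{1}^t$, read off the quadratic form directly, and reduce the determinant to $\frac{1+cs}{\prod x_i}$ before the same algebraic simplification of the numerator. The only cosmetic difference is that you invoke the matrix determinant lemma where the paper expands the determinant by multilinearity, but these are the same computation.
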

\begin{proof}
This lemma is a simple consequence of the fact that if $\s$ is of the form (\ref{un_potential}) then its Hessian is given by
$$
\hess(\s)=
\begin{pmatrix}
\diagentry{\frac{1}{x_1}}\\
&\diagentry{\xddots}\\
&&\diagentry{\frac{1}{x_n}}\\
\end{pmatrix}+ 
\left(\frac{1}{1-\sum_{i=1}^nx_i}+h''\left(\sum_{i=1}^n x_i\right)\right)\begin{pmatrix}
1\cdots1\\
\vdots\\
1\cdots 1
\end{pmatrix}.
$$
It follows immediately that 
$$
\alpha^t\hess(\s)\alpha=\sum_{i=1}^n\frac{\alpha_i^2}{x_i}+\left(\frac{1}{1-\sum_{i=1}^nx_i}+h''\left(\sum_{i=1}^nx_i\right)\right)\left(\sum_{i=1}^n\alpha_i\right)^2.
$$
As for the determinant of $\hess(\s)$ it is given by
$$
\det\hess(\s)=\det\left(\frac{e_1}{x_1}+U(t)v,\cdots,\frac{e_n}{x_n}+U(t)v\right)
$$
where \begin{itemize}
\item  $e_i$ denotes the vector in $\bbR^n$ which has all entries zero except for the $i$th entry which is $1$, 
\item $v=(1,\cdots,1)$, 
\item $t=\sum_{i=1}^nx_i$ 
\item and 
$$
U(t)=\frac{1}{1-t}+h''(t).
$$
\end{itemize}
It follows from the multilinearity of the determinant that 
$$
\det\hess(\s)=\frac{1}{\prod_{i=1}^nx_i}+U(t)\sum_{i=1}^n\frac{1}{\prod_{j=1,\cdots,n,j\ne i}x_j}=\frac{1+U(t)t}{\prod_{i=1}^nx_i}.
$$
Now 
$$
1+U(t)t=\frac{1}{1-t}+th''(t).
$$
Replacing in (\ref{spec_inv}) the result follows.
\end{proof}
We see that in our setting $\F(\s)$ really just depends on our unknown function $h$. To recover the metric we need only recover $h$ and we will show that the expression (\ref{Fh}) for all $\rho \in \Cinf(\bbR_0^+)$ and $\alpha\in \bbR^n$ completely determines $h$. We are now in a position to prove out main Theorem \ref{main}. 
\begin{proof}[proof of Theorem \ref{main}]  Choose $\alpha=(1,-1,0,\cdots 0)$ in (\ref{Fh}). This is only possible if $n\geq 2$. The argument of $\rho$ i.e. $\alpha^t\hess(\s)\alpha$ becomes $\frac{1}{x_1}+\frac{1}{x_2}$. Expression (\ref{Fh}) simplifies to yield
\begin{equation}\label{Fh11}
\F(\s)=\int_{\cp}\rho\left(\frac{1}{x_1}+\frac{1}{x_2}\right)\sqrt{\frac{\frac{1}{1-\sum_{i=1}^nx_i}+\sum_{i=1}^nx_ih''\left(\sum_{i=1}^nx_i\right)}{\prod_{i=1}^nx_i}}.
\end{equation}
The upshot is that we will be able to simplify the integral in expression (\ref{Fh}) using a change of variable. We will change variables by setting 
$$
\begin{cases}
&\nu=\frac{1}{x_1}+\frac{1}{x_2}\\
&\mu_k=\sum_{i=1}^k x_i, \, k=2,\cdots n.
 \end{cases}
 $$ 
 The expression above does not a define a change of variables on the whole of $\cp$ since there an obvious symmetry $(x_1,x_2,\cdots, x_n)\mapsto (x_2,x_1,\cdots, x_n)$. But if we restrict to $\cp_+$ where 
{
$$
\cp_+=\{x\in\interior\left(\cp\right): x_1> x_2\},
$$}
one can see it becomes a change of variables whose inverse is 
$$
\begin{cases}
&x_1=\frac{1}{2}\left(\mu_2+\sqrt{\mu_2^2-\frac{4\mu_2}{\nu}}\right)\\
&x_2=\frac{1}{2}\left(\mu_2-\sqrt{\mu_2^2-\frac{4\mu_2}{\nu}}\right)\\
&x_k=\mu_k-\mu_{k-1}, \,k\geq 3,
\end{cases}
$$
on the region 
$$\{(\nu,\mu_2,\cdots,\mu_n):0<\frac{4}{\nu}<\mu_2<1,\, 0<\mu_k<1,\,\mu_{k-1}<\mu_{k}, \, \forall k\in \{3,\cdots,n\} \}.$$ This can be easily verified by noting that $x_1+x_2=\mu_2$ and that  $x_1x_2=\frac{\mu_2}{\nu}$. By symmetry, the integral in expression (\ref{Fh}) is twice the integral over $\cp_+$ of the same integrand. We will express this in the $(\nu,\mu)$ coordinates.
\begin{itemize}
\item We start by calculating the new volume form. The volume form $d\nu\wedge d\mu_2\wedge\cdots\wedge d\mu_n$ in the $x$ coordinates is
\begin{equation*}
\begin{split}
&d\nu\wedge d\mu_2 \wedge ... \wedge d\mu_n = \\
&=-\left(\frac{dx_1}{x_1^2}+\frac{dx_2}{x_2^2}\right)\wedge (dx_1+dx_2) \wedge ... \wedge (dx_1+...+dx_n)\\
&= \left(-\frac{1}{x_1^2}+\frac{1}{x_2^2}\right)dx_1\wedge dx_2  \wedge ... \wedge dx_n\\
\end{split}
\end{equation*}
so that 
$$
dx_1\wedge dx_2  \wedge ... \wedge dx_n=\frac{\mu_2}{\nu^2\sqrt{\mu_2^2-\frac{4\mu_2}{\nu}}}d\nu\wedge d\mu_2\wedge\cdots\wedge d\mu_n.
$$
\item Next we will express the integrand in expression (\ref{Fh11}) as a function of $(\nu,\mu)$. This is straightforward. We have
\begin{equation*}
\begin{split}
\rho\left(\frac{1}{x_1}+\frac{1}{x_2}\right)\sqrt{\frac{\frac{1}{1-\sum_{i=1}^nx_i}+\sum_{i=1}^nx_ih''\left(\sum_{i=1}^nx_i\right)}{\prod_{i=1}^nx_i}}&\\
=\rho(\nu)\sqrt{\frac{\frac{1}{1-\mu_n}+\mu_nh''(\mu_n)}{\frac{\mu_2}{\nu}\prod_{i=3}^n(\mu_i-\mu_{i-1})}}&\\
=\rho(\nu)\sqrt{\frac{V^2(\mu_n)\nu}{{\mu_2}\prod_{i=3}^n(\mu_i-\mu_{i-1})}}
\end{split}
\end{equation*}
where 
\begin{equation}\label{V}
V(\mu_n)=\sqrt{\frac{1}{1-\mu_n}+\mu_nh''(\mu_n)}.
\end{equation}
Here we used the fact that $x_1x_2=\frac{\mu_2}{\nu}$. Note that for $n=2$, we set $\prod_{i=3}^n(\mu_i-\mu_{i-1})=1$.
\item The region of integration can be written down from the fact that, as we said above, the inverse of the coordinate change is defined over 
$$\{(\nu,\mu_2,\cdots,\mu_n):0<\frac{4}{\nu}<\mu_2<1,\, 0<\mu_k<1,\,\mu_{k-1}<\mu_{k}, \, \forall k\in \{3,\cdots,n\} \}.$$
\end{itemize}
Putting all these calculations together we see that integral (\ref{spec_inv}) in our context can be written as expression (\ref{Fh11}) which in turn simplifies to give
\begin{IEEEeqnarray*}{c}
2\int_4^\infty \int_{4/\nu}^1 \int_{\mu_2}^1\cdots \int_{\mu_{n-1}}^1
\rho(\nu)\frac{\mu_2\sqrt{\frac{V(\mu_n)^2\nu}{\mu_2\prod_{j=3}^n(\mu_j-\mu_{j-1})}}}{{\nu^2}\sqrt{\mu_2^2-\frac{4\mu_2}{\nu}}}d\mu_n\cdots d\mu_3d\mu_2d\nu\\
=2\int_4^\infty \rho(\nu)\nu^{-\frac{3}{2}}\int_{4/\nu}^1 \int_{\mu_2}^1\cdots \int_{\mu_{n-1}}^1\frac{V(\mu_n)}{\sqrt{\mu_2-\frac{4}{\nu}}\sqrt{\prod_{j=3}^n(\mu_j-\mu_{j-1})}}d\mu_n\cdots d\mu_3d\mu_2d\nu.\\
\end{IEEEeqnarray*}

Since $\rho$ is any arbitrary, smooth, compactly supported function, we conclude that the following function is spectrally determined for $\nu\geq 4$:
\begin{equation*}
f_u(\nu) = \int_{4/\nu}^1 \int_{\mu_2}^1\cdots 
\int_{\mu_{n-1}}^1
\frac{V(\mu_n)}
{\sqrt{\mu_2-\frac{4}{\nu}}\sqrt{\prod_{j=3}^n(\mu_j-\mu_{j-1})}}
d\mu_n\cdots d\mu_3d\mu_2.
\end{equation*}

Let us now apply a final change of coordinates. Consider $s_i$, $i=1,\cdots ,n$, with $s_1 = 1-4/\nu$ and $s_k = 1-\mu_k$ for $k=2,\cdots n$. We have that:
\begin{equation*}
\int_{0}^{s_1} \int_{0}^{s_2}\cdots 
\int_{0}^{s_{n-1}}
\frac{{V}(1-s_n)}
{\prod_{j=1}^{n-1}\sqrt{(s_j-s_{j+1})}}
ds_n\cdots ds_2
\end{equation*}
is spectrally determined for any $s_1 \in [0,1]$. This, however, corresponds to the successive application of the fractional integral operator $J$ (up to a multiplicative constant), as described in (\ref{Abel}). More precisely this is $J(\cdots (JV(1-s_n))(s_{n-1})\cdots(s_1))$. Since as we said before $J$ is invertible we have have that $V(1-s_n)$ is determined and it follows from the definition of $V$ (see equation \ref{V}) that $h''$ is determined so that our metric is determined.
\end{proof}


\begin{thebibliography}{99}

\bibitem[A]{a}  M. Abreu, {\em K\"ahler geometry of toric manifolds in symplectic coordinates}, Symplectic and contact topology: interactions and perspectives (Toronto, ON/Montreal, QC, 2001), Fields Inst. Commun., {\bf 35}, (2003), 1--24.

\bibitem[A2]{a2} M. Abreu, {\em Toric Kahler Metrics: Cohomogeneity One Examples of Constant Scalar Curvature in Action-Angle Coordinates}, Journal of Geometry and Symmetry in Physics, 17 (2010), 1--33.


\bibitem[D]{d} B. Dammerman, {\em Diagonalizing cohomogeneity-one Einstein metrics}, J. Geom. Phys. {\bf 59} (2009) no. 9, 1271--1284.

\bibitem[DGS1]{dgs1} E. Dryden, V. Guillemin, and R. Sena-Dias, {\em Hearing Delzant polytopes from the equivariant spectrum}, Trans. Amer. Math. Soc. {\bf 364} (2012), no. 2, 887--910.
%
\bibitem[DGS3]{dgs3}  E. Dryden, V. Guillemin, and R. Sena-Dias, {\em Semi-classical weights and equivariant spectral theory}, arXiv:1401.8285.

\bibitem[DMS]{dms}  E. Dryden, D. Macedo, and R. Sena-Dias, {\em Recovering $S^1$-invariant metrics on $S^2$ from the equivariant spectrum}, arXiv:1401.8285.

\bibitem[GZ]{gz} K.Grove, W.Ziller, {\em Cohomogeneity one manifolds with positive Ricci curvature}, Inv. Math. {\bf 149} ( 2002),619--649.

\bibitem[G]{g} V. Guillemin, {\em Kaehler structures on toric varieties}, J. Differential Geom. {\bf 40} (1994), no. 2, 285--309.

\bibitem[GS]{gs} V.~Guillemin and S. Sternberg,  {\em Semi-classical analysis},  International Press, Boston, MA, 2013.


\bibitem[M]{m} J. Milnor, {\em Eigenvalues of the Laplace operator on certain manifolds}, Proc. Nat. Acad. Sci. U.S.A. {\bf 51} (1964), 542.

\bibitem[T]{t} {S. Tanno}, {\em Eigenvalues of the Laplacian of Riemannian manifolds}, Tohoku Math. J. (2) {\bf 25} (1973), 391--403.


\bibitem[Z]{z} S. Zelditch, {\em The inverse spectral problem for surfaces of revolution}, J. Differential Geom. {\bf 49} (1998), 2007--264.


\end{thebibliography}
\end{document}